\renewcommand{\a}{\mathbf{a}}
\newcommand{\x}{\mathbf{x}}
\renewcommand{\H}{\mathcal{H}}
\renewcommand{\P}{\mathbb{P}}
\DeclareMathOperator{\gr}{\mathsf{gr}}
\DeclareMathOperator{\Bl}{Bl}
\DeclareMathOperator{\Oo}{\mathcal{O}}
\newcommand{\R}{\mathbb R}
\newcommand{\Z}{\mathbb Z}
\newcommand{\C}{\mathbb C}
\newtheorem{theorem}{Theorem}[section]
\newtheorem{lemma}[theorem]{Lemma}
\newtheorem{corollary}[theorem]{Corollary}
\newtheorem{proposition}[theorem]{Proposition}
\newtheorem{conjecture}[theorem]{Conjecture}
\theoremstyle{definition}
\newtheorem{definition}[theorem]{Definition}
\newtheorem{example}[theorem]{Example}
\theoremstyle{remark}
\begin{document}

\title{Graded Ehrhart theory and toric geometry}
\author{Ian Cavey}
\date{\today}
\maketitle

\begin{abstract}
    We give two new constructions of the \textit{harmonic algebra} of a lattice polytope $P$, a bigraded algebra whose character is the $q$-Ehrhart series of $P$ defined by Reiner and Rhoades \cite{RR24}. First, we show that the harmonic algebra is the associated graded algebra of the semigroup algebra of $P$ with respect to a certain natural filtration, clarifying it's relationship with the more classical semigroup algebra. We then give a geometric interpretation of the harmonic algebra as a quotient of the ring of global sections of a certain family of line bundles on the blowup of the toric variety associated to $P$ at a generic point. Using this connection to toric geometry we resolve one the main conjectures of Reiner and Rhoades \cite{RR24} by showing that the harmonic algebra is \textit{not} finitely generated in general.
\end{abstract}

\section{Introduction}

The \textit{Ehrhart series} of a lattice polytope $P\subseteq \R^n$ is the generating function for the sequence of integer point counts in the dilations $mP = \{ (mp_1,\dots,mp_n) \,|\, (p_1,\dots,p_n) \in P\}$,
\begin{equation}
    E_P(t) = \sum_{m\geq 0} \left| (mP) \cap \Z^n \right| \, t^m. 
\end{equation} 
Although the Ehrhart series is defined as a formal power series, it can always be expressed as a rational function of $t$. A powerful algebraic tool for studying Ehrhart series is the \textit{semigroup algebra} of $P$,
\begin{equation}\label{semigroupalgebra}
    A_P = \bigoplus_{\substack{(p_1,\dots,p_n)\in (mP)\cap \Z^n\\ m\geq 0}} \C \, x_1^{p_1}\cdots x_n^{p_n} z^m \subseteq \C[x_1^{\pm 1},\dots, x_n^{\pm 1}][z],
\end{equation} 
a graded algebra whose Hilbert series is $E_P(t)$ by construction. This algebra can be interpreted geometrically as the coordinate ring of the toric variety $X_P$ associated to $P$, providing a fruitful connection between algebraic geometry and polytopal combinatorics. 

In this paper, we study $q$-analogues of these objects recently introduced by Reiner and Rhoades \cite{RR24}. The \textit{$q$-Ehrhart series} of $P$, denoted $E_P(t,q)$, is a refinement of the classical Ehrhart series of $P$. The coefficient of $t^m$ in the $q$-Ehrhart series is a polynomial in $q$ with nonnegative integer coefficients that specializes at $q=1$ to the number of integer points in $mP$. The precise definition of these coefficients in recalled in Section \ref{sec:background}.

By analogy with the semigroup graded algebra for Ehrhart series, Reiner and Rhoades constructed a bigraded algebra $\H_P$, called the \textit{harmonic algebra} of $P$, whose bigraded Hilbert series is $E_P(t,q)$. The definition of the harmonic algebra, recalled in Section \ref{sec:background}, is somewhat involved as it requires dualizing with respect to the action of one polynomial ring on another via partial derivatives. In particular, as noted by Reiner and Rhoades, it is not clear from the definition that $\H_P$ is closed under multiplication. Nonetheless, the harmonic algebra can be explicitly presented for small examples and for some special classes of polytopes (\cite{RR24} Proposition 5.7). Based on this evidence, Reiner and Rhoades conjectured the following.

\begin{conjecture}
    \label{conj}(\cite{RR24} Conjecture 5.5 (i))
    For any lattice polytope $P$, the harmonic algebra $\H_P$ is finitely generated. In particular $E_P(t,q)$ is a rational function of $t$ and $q$.
\end{conjecture}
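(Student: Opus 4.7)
My plan is to exploit the associated-graded interpretation promised in the abstract, combined with the well-known finite generation of the semigroup algebra $A_P$. Since $A_P$ is the coordinate ring of the (possibly non-normal) toric variety $X_P$, it is finitely generated as a $\C$-algebra by Gordan's lemma. If I can show that homogeneous lifts of a set of generators of $\H_P = \gr A_P$ already generate $A_P$ itself, then by a standard argument the associated graded inherits finite generation from $A_P$, and the rationality of $E_P(t,q)$ follows from the general Hilbert-series theory of finitely generated $\N^2$-graded algebras.

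Concretely, I would pick a finite generating set $\{f_1,\dots,f_N\}$ for $A_P$ consisting of lattice-point monomials $x^p z^m$, and let $\bar f_i$ denote their symbols in $\H_P$. To conclude that the $\bar f_i$ generate $\H_P$, I need that for every $h \in \H_P$ of bidegree $(m,d)$ there is a lift $f \in A_P$ in the $d$th filtration piece whose symbol is $h$ and which can be written as a polynomial in the $f_i$ whose leading symbol recovers $h$. As a parallel route, I would invoke the geometric description of $\H_P$ as a subquotient of $\bigoplus_{m,d} H^0(\Bl_p X_P, \Ll_{m,d})$ for the relevant family of line bundles on the blowup. Each graded piece is finite-dimensional, and finite generation of the total section ring would follow if $\Bl_p X_P$ is a Mori dream space, or at least if the $\Ll_{m,d}$ span a rational polyhedral subcone of the effective cone whose associated multi-section ring is finitely generated. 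Since $X_P$ is itself a projective toric variety, and one is blowing up only a single point, this is an optimistic but plausible starting hope.

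The main obstacle, I expect, lies in the delicacy of passing to the associated graded. Writing a general element of $A_P$ as a polynomial in the $f_i$ is routine, but this polynomial expression need not be \emph{filtration-clean}: the top-filtration parts of distinct monomials in the $f_i$ can cancel, producing an element whose true symbol in $\H_P$ lies strictly below the naive prediction. Controlling these cancellations requires very detailed information about the filtration on $A_P$, and there is no obvious a priori reason forbidding new generators of $\H_P$ from appearing in arbitrarily high bidegree. The geometric route faces a parallel difficulty: blowup of a projective variety at a generic point can destroy the Mori dream space property, as happens already for $\P^2$ blown up at sufficiently many very general points, so the section ring on $\Bl_p X_P$ need not be finitely generated in general. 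Both obstacles would have to be ruled out uniformly over all lattice polytopes $P$, which is where I anticipate the real work—and the real danger to the conjecture—to lie.
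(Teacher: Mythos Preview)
The fundamental problem is that you are attempting to prove a statement that the paper actually \emph{disproves}. Conjecture~\ref{conj} is false: the paper exhibits lattice triangles $P$ (for instance the triangle with vertices $(0,0)$, $(7,56)$, $(-45,30)$, corresponding to $X_P \simeq \P(15,26,7)$) for which $\H_P$ is \emph{not} finitely generated. So no amount of care in the argument you sketch can succeed.

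What is striking is that you have already put your finger on exactly the failure mechanism. In your final paragraph you note (a) that passing to the associated graded need not preserve finite generation because the naive top-symbol terms of monomials in the $f_i$ can cancel, and (b) that blowing up a toric variety at a generic point can destroy the Mori dream space property. Both worries are realized. For (a), the implication ``$A_P$ finitely generated $\Rightarrow$ $\gr A_P$ finitely generated'' is simply false for the order-of-vanishing filtration, and the paper's counterexample witnesses this. For (b), the paper shows $\H_P \simeq R_P/(s)$ where $R_P$ is the section ring of $\Bl_e X_P$, so $\H_P$ is finitely generated iff $R_P$ is; Gonz\'alez and Karu proved that $\Bl_e \P(15,26,7)$ is not a Mori dream space, hence $R_P$ is not finitely generated for the corresponding triangle. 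Your analogy with $\P^2$ blown up at many general points is apt in spirit, but the phenomenon already occurs for a \emph{single} blowup of a singular toric surface.

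In short: your obstacles are not technical hurdles to be overcome but genuine obstructions, and the correct response to the conjecture is a counterexample rather than a proof.
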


In this paper, we resolve the algebraic part of Conjecture \ref{conj} in the negative, showing that there exist lattice triangles $P$ for which $\H_P$ is not finitely generated (see Example \ref{ex:nonfg}). The weaker conjecture, the rationality of the $q$-Ehrhart series $E_P(t,q)$, remains open. 

Our results on the harmonic algebra are based on the following simple description of $\H_P$ established in Section \ref{sec:harmonic}. Recall that the order of vanishing of a Laurent polynomial $f(x_1,\dots,x_n)$ at a point $p$ is the smallest degree $d$ such that some partial derivative of $f$ of order $d$ is nonzero at $p$.

\begin{theorem}\label{thm:filtration}
    For any lattice polytope $P$, the harmonic algebra $\H_P$ is isomorphic as a bigraded algebra to the associated graded algebra of $A_P$ with respect to the filtration by order of vanishing at $e=(1,\dots,1)\in \R^n$.
\end{theorem}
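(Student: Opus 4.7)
The strategy is to decompose $A_P$ by the $z$-grading and prove the isomorphism one component at a time via Macaulay inverse systems, then verify multiplicativity separately. Write $V_m \subseteq \C[x_1^{\pm 1},\dots,x_n^{\pm 1}]$ for the $z^m$-component of $A_P$, with basis $\{x^p : p \in Z_m\}$ where $Z_m := (mP) \cap \Z^n$ is regarded as a finite subset of $\C^n$. The order-of-vanishing filtration respects the $z$-grading and restricts to $V_m \supseteq F^1 V_m \supseteq F^2 V_m \supseteq \dots$, so $\gr A_P = \bigoplus_{m,d} \gr^d V_m$ is naturally bigraded, and it suffices to match $\gr V_m$ with the $z^m$-component of $\H_P$ for each $m$.

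After the substitution $y_i = x_i - 1$, every $f \in V_m$ has a Taylor expansion $f = \sum_\alpha c_\alpha(f)\, y^\alpha$ at the origin, and $F^d V_m$ is the kernel of the jet map $V_m \to \C[y]/(y_1,\dots,y_n)^d$. Taking the lowest nonzero homogeneous part of the Taylor expansion embeds $\gr^d V_m$ into the degree-$d$ homogeneous polynomials in $y$. To identify the image, I would use the apolarity pairing $\langle y^\alpha, x^p \rangle = \alpha!\binom{p}{\alpha}$: the annihilator of $V_m$ in $\C[y]$, after the triangular change of basis from monomials $y^\alpha$ to the falling factorials $(y)_\alpha$, is exactly the vanishing ideal $I(Z_m)$ of $Z_m \subseteq \C^n$. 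Standard Macaulay duality then identifies $\gr V_m$ with the harmonic space $H_{Z_m}$, which by construction is the $z^m$-component of $\H_P$ in the sense of \cite{RR24}.

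For the algebra structure, observe that multiplication in the ambient domain $\C[x^{\pm 1}][z]$ is filtration-compatible: since $e$ is a regular point of $(\C^*)^n$, orders of vanishing add, giving $F^d V_m \cdot F^{d'} V_{m'} \subseteq F^{d+d'} V_{m+m'}$, and the leading Taylor form of a product equals the product of the leading Taylor forms. Hence the vector-space identification of the previous step is automatically an isomorphism of bigraded algebras. A welcome by-product: this gives a conceptual proof that $\H_P$ is closed under multiplication at all, a fact not obvious from the Reiner--Rhoades definition.

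The main obstacle I anticipate is matching the apolarity setup above with the specific dualizing construction of $\H_P$ in \cite{RR24}: one must check that the two pairings agree (perhaps up to a harmless rescaling) and that the resulting harmonic spaces really do coincide, rather than merely having the same Hilbert series. Once this translation is in place, the remaining checks---filtration multiplicativity at a smooth point, the falling-factorial change of basis, and leading-term multiplicativity---are routine.
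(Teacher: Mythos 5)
Your proposal is correct and follows essentially the same route as the paper: shift coordinates to $e$, identify the order-of-vanishing filtration with the lowest-degree-Taylor-part map, and match the image with the harmonic space via the triangular change of basis between monomials and falling factorials (your apolarity computation $\langle y^\alpha, x^p\rangle = \alpha!\binom{p}{\alpha}$ is exactly the paper's Lemma comparing the coefficients of $\sum c_{\a}(1+\x)^{\a}$ with those of $\sum c_{\a}e^{\a\cdot\x}$, and your Macaulay-duality step is the cited Corollary 4.4 / Lemma 4.6 of \cite{RR24}). The ``main obstacle'' you flag is precisely the content of the paper's Proposition \ref{prop:harmonics}, and your sketch of how to resolve it is the argument the paper gives.
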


\begin{example}\label{ex:harmonics}
    Let $P\subseteq \R^2$ be the convex hull of the points $(0,0),(2,1)$, and $(1,2)$.
    \begin{figure}[h]
        \centering
        \begin{tikzpicture}[scale=1.7]
            \filldraw[fill opacity = .2,thick] (0,0) -- (2,1) -- (1,2) -- cycle;
             \node[label=right:{$(2,1)$}] at (2,1) (nodeA) {};
             \node[label=right:{$(1,2)$}] at (1,2) (nodeB) {};
             \node[label=right:{$(1,1)$}] at (1,1) (nodeB) {};
             \node[label=left:{$(0,0)$}] at (.02,.1) (nodeC) {};
            \filldraw (0,0) circle (1pt);
            \filldraw (1,1) circle (1pt);
            \filldraw (2,1) circle (1pt);
            \filldraw (1,2) circle (1pt);
        \end{tikzpicture}
        \caption{The lattice triangle $P$}
        \label{fig:triangle}
    \end{figure}
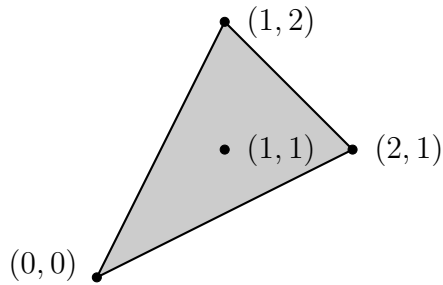
    The triangle $P$ contains $4$ integer points so we have $E_P(t)=1+4t+\cdots$. Using the definition (\ref{semigroupalgebra}), the degree-one piece of the semigroup algebra is the linear span $(A_P)_1 \simeq \langle 1, xy,x^2y, xy^2 \rangle$, where we write $x=x_1$ and $y=x_2$, and drop the grading parameter $z$. Let us write $F_{1,d}$ for the subset of $(A_P)_1$ consisting of polynomials that vanish at $e$ with order at least $d$. The filtration $(A_P)_1 = F_{1,0} \supseteq F_{1,1}\supseteq F_{1,2}\supseteq F_{1,3}\supseteq \cdots$ is given by 
    \[ (A_P)_1 = \langle 1, xy,x^2y, xy^2 \rangle \supseteq \langle xy-1,x^2y-1,xy^2-1 \rangle \supseteq \langle x^2 y + x y^2 -3xy+1 \rangle \supseteq \langle 0 \rangle \supseteq \cdots\]
    Theorem \ref{thm:filtration} says there is an identification $(\H)_{m,d}\simeq F_{m,d}/F_{m,d+1}$ that preserves the multiplicative structure between the graded pieces. In particular, the coefficient on $t$ in $E_P(t,q) = 1 + (1+2q+1)t+\cdots$ is the generating function for the differences in dimension of consecutive filtered pieces of $(A_P)_1$.
\end{example}

Next, we connect the harmonic algebra to toric geometry. Let us briefly recall the classical setup of toric geometry (see, for example, \cite{F93}). Let $X_P$ be the toric variety associated to the lattice polytope $P$. The variety $X_P$ contains a dense open subset that is identified with the algebraic torus $T = (\C^*)^n$, on which the regular functions are the Laurent polynomials $\C[x_1^{\pm1},\dots,x_n^{\pm1}]$. The toric variety $X_P$ is equipped with a line bundle $\Oo(H)$, and the global sections of any power $\Oo(mH)$ are identified, via restriction to $T$, with Laurent polynomials in $x_1,\dots,x_n$ supported on $mP$. This gives the well-known identification
\begin{equation}
    A_P \simeq \bigoplus_{m\geq 0} H^0(X_P,\Oo(mH)).
\end{equation} 
Now let $\Bl_e X_P$ denote the blowup of $X_P$ at the identity point of the torus $e=(1,\dots,1)\in T\subseteq X_P$, with exceptional divisor $E\subseteq \Bl_e X_P$. Since $e\in X_P$ is not $T$-fixed, $\Bl_e X_P$ is not, in general, a toric variety. Consider the bigraded section ring
\begin{equation} 
    R_P := \bigoplus_{m,d \in \Z} H^0(\Bl_e X_P, \Oo(mH-d E)),
\end{equation}
where we identify $\Oo(mH)$ with its pullback to $\Bl_e X_P$. The line bundle $\Oo(E)$ has a canonical section $s$, whose zero locus is $E$, which is a regular element of $R_P$. 

The following result, proved in Section \ref{sec:toric}, is a geometric rephrasing of Theorem \ref{thm:filtration}.

\begin{theorem}\label{thm:toric}
    For any lattice polytope $P$ we have $\H_P\simeq R_P/(s)$ as bigraded algberas.
\end{theorem}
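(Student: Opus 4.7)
The plan is to identify the bigraded pieces $(R_P)_{m,d}$ with the filtered subspaces $F_{m,d} \subseteq (A_P)_m$ of Theorem \ref{thm:filtration}, and then to show that the ideal $(s) \subseteq R_P$ corresponds to the shift $F_{m,d+1} \hookrightarrow F_{m,d}$. Combining these two observations with Theorem \ref{thm:filtration} immediately yields $R_P/(s) \simeq \gr_F A_P \simeq \H_P$.

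For the first step, $e = (1,\dots,1)$ lies in the dense torus $T \subseteq X_P$, so $e$ is a smooth point of $X_P$ and $\pi: \Bl_e X_P \to X_P$ is the ordinary blowup of a smooth point, with exceptional divisor $E \simeq \P^{n-1}$. The standard local computation gives $\pi_* \Oo(-dE) = \mathcal{I}_e^d$ for $d \geq 0$, where $\mathcal{I}_e \subseteq \Oo_{X_P}$ is the ideal sheaf of $e$; and for $k \geq 0$ the short exact sequences $0 \to \Oo((k-1)E) \to \Oo(kE) \to \Oo_E(-k) \to 0$ together with the vanishing $H^0(\P^{n-1}, \Oo(-k)) = 0$ for $k > 0$ imply by induction that $\pi_* \Oo(kE) = \Oo_{X_P}$. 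Applying the projection formula and $H^0(\Bl_e X_P, \mathcal{F}) = H^0(X_P, \pi_* \mathcal{F})$, I obtain
\[ H^0(\Bl_e X_P, \Oo(mH - dE)) \simeq F_{m,d} \]
for every $d \in \Z$, where I extend $F_{m,d} = (A_P)_m$ for $d \leq 0$. This identification is compatible with the multiplicative structure because both sides embed by restriction to $T$ into the Laurent polynomial ring, inside which multiplication is the same.

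For the second step, the canonical section $s$ of $\Oo(E)$ is nonvanishing on $\Bl_e X_P \setminus E \supseteq T$, so under restriction to $T$ it corresponds to the constant polynomial $1$. Hence multiplication by $s$ realizes the natural inclusion $F_{m,d+1} \hookrightarrow F_{m,d}$ as the degree-shift map $(R_P)_{m,d+1} \to (R_P)_{m,d}$. Since $\Bl_e X_P$ is integral, $s$ is a non-zero-divisor, so in bidegree $(m,d)$ the ideal $(s)$ equals the image of $F_{m,d+1}$ in $F_{m,d}$, and therefore
\[ (R_P/(s))_{m,d} \simeq F_{m,d}/F_{m,d+1} \simeq (\gr_F A_P)_{m,d} \simeq (\H_P)_{m,d}, \]
the last isomorphism by Theorem \ref{thm:filtration}. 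The bigraded ring structure is preserved throughout because all multiplications descend from multiplication of Laurent polynomials on $T$. The most delicate point is the sheaf identification $\pi_* \Oo(-dE) = \mathcal{I}_e^d$ in the first step; while classical on the blowup of a smooth point in affine space, I would verify it carefully in the present setting to confirm that possible singularities of $X_P$ away from $e$ cause no trouble---which they do not, since $\pi$ is an isomorphism outside $e$ and the claimed equality is local at $e$.
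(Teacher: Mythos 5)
Your proof is correct and follows essentially the same route as the paper's: identify $(R_P)_{m,d}$ with the filtered piece $F_{m,d}\subseteq (A_P)_m$ of sections vanishing to order at least $d$ at $e$, observe that multiplication by $s$ realizes the inclusions $F_{m,d+1}\hookrightarrow F_{m,d}$, and conclude $R_P/(s)\simeq\bigoplus_{m,d}F_{m,d}/F_{m,d+1}\simeq\H_P$ via Theorem \ref{thm:filtration}. The only difference is that you spell out the standard pushforward computations ($\pi_*\Oo(-dE)=\mathcal{I}_e^d$, $\pi_*\Oo(kE)=\Oo_{X_P}$) that the paper leaves implicit.
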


Theorem \ref{thm:toric} connects the harmonic algebra and $q$-Ehrhart theory to a large body of algebraic and geometric results about finite generation of section rings of blowups of toric varieties. The algebraic properties of these rings are subtle already for lattice triangles $P$ for which the toric variety $X_P$ is a weighted projective plane. For example, Cutkosky \cite{C91} studied the finite generation of sections rings for blowups of weighted projective planes described algebraically as symbolic algebras of monomial prime ideals. Specific examples of weighted projective planes, and their corresponding lattice triangles, whose blowups have non-finitely generated section rings have been given by Goto, Nishida, and Watanabe \cite{GNW94} as well as Gonz\'alez and Karu \cite{GK16}.

By Theorem \ref{thm:toric}, the harmonic algebra $\H_P$ is finitely generated if and only if the section ring $R_P$ is finitely generated. This connection, combined with the previous work described above, therefore resolves the algebraic portion of Conjecture \ref{conj} in the negative.

\begin{corollary}
    There exist lattice triangles $P$ such that the harmonic algebra $\H_P$ is not finitely generated.
\end{corollary}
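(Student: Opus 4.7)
The plan is to combine Theorem~\ref{thm:toric} with non-finite-generation results from the toric geometry literature cited immediately above the corollary. The first step is to justify the equivalence already asserted in the paper: since $s$ is a regular homogeneous element of $R_P$, one has that $R_P$ is finitely generated if and only if $\H_P \simeq R_P/(s)$ is. The forward direction is automatic, while the reverse follows by lifting generators of $R_P/(s)$ to $R_P$ and adjoining $s$, with the induction terminating in the grading in which $s$ has positive degree.

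Next, by Theorem~\ref{thm:toric}, producing a lattice triangle $P$ for which $\H_P$ fails to be finitely generated reduces to producing a lattice triangle $P$ for which $R_P$ fails to be finitely generated. I would focus on triangles $P$ whose toric variety $X_P$ is a weighted projective plane $\P(a,b,c)$: in that case the Picard group of $\Bl_e X_P$ has rank two, generated rationally by $H$ and $E$, so $R_P$ is, up to a finite extension, the Cox ring of $\Bl_e X_P$. Non-finite generation of such Cox rings for specific triples $(a,b,c)$ was established by Cutkosky~\cite{C91}, Goto--Nishida--Watanabe~\cite{GNW94}, and Gonz\'alez--Karu~\cite{GK16}. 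The proof would exhibit an explicit triple from one of these sources---this is the content promised in Example~\ref{ex:nonfg}---and then deduce that the associated $R_P$, and hence $\H_P$, is not finitely generated.

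The main obstacle I anticipate is matching conventions. The cited references phrase their results in terms of symbolic Rees algebras of monomial prime ideals or in terms of the full Cox ring of the blowup, rather than directly in terms of the bigraded section ring $R_P$ defined here. In the rank-two Picard setting these algebras differ only by a finite extension, so in principle the translation is routine via a multi-Veronese argument; nevertheless the failure of finite generation should be verified carefully for the specific triangle $P$ one selects, and it is this translation step that I expect to be the only nontrivial work in the proof.
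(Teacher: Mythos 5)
Your proposal is correct and follows essentially the same route as the paper: Theorem \ref{thm:toric} reduces the question to (non-)finite generation of the section ring $R_P$ (via exactly the lift-generators-and-adjoin-$s$ argument you sketch, which works because for fixed $m$ the $d$-grading of $R_P$ is bounded), and the failure of finite generation of $R_P$ is then imported from the weighted-projective-plane literature, the paper using Gonz\'alez--Karu's $\P(15,26,7)$ realized by the lattice triangle with vertices $(0,0)$, $(7,56)$, $(-45,30)$. The only difference is that the paper's Example \ref{ex:nonfg} additionally unpacks the Gonz\'alez--Karu stable-base-locus input into a direct argument that $\H_P$ itself admits no finite generating set, which sidesteps both the regular-element lifting step and the finite-index (multi-Veronese) translation between $R_P$ and the Cox ring that you rightly flag as the point requiring care.
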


In Example \ref{ex:nonfg}, adapted from an example studied by Gonz\'alez and Karu \cite{GK16}, we show that the harmonic algebra of the triangle with vertices $(0,0), (7,56),$ and $(-45,30)$ is not finitely generated. Our argument for non-finite generation depends on the geometric results of Gonzalez and Karu. It would be interesting to give an elementary proof of the non-finite generation of the harmonic algebra for this example.

A natural next problem is to determine for which polytopes the harmonic algebra $\H_P$ is finitely generated. On the geometric side, this is an ongoing area of research largely focused on the case when $P$ is a lattice triangle. Cutkosky \cite{C91} gave a geometric criterion (depending on of the existence of certain curves on $\Bl_e X_P$) for the section ring $R_P$ to be finitely generated. It is not clear, however, how the existence of such curves depends on the combinatorics of the triangle. In addition to the examples referred to above, a systematic study of this question was recently initiated by Anaya, Gonz\'alez, and Karu \cite{AGK25}.

We hope that this connection to toric geometry will prove useful for resolving other conjectures about the harmonic algebra, and provide a new perspective on these toric geometry problems as well. \\

\textbf{Acknowledgments:}
I thank Victor Reiner for the engaging discussion about $q$-Ehrhart theory, and both Victor Reiner and Brendon Rhoades for their comments and suggestions on an earlier draft of this work.

\section{Background}\label{sec:background}

We first review the constructions and basic facts about $q$-Ehrhart series and the harmonic algebra from \cite{RR24}. We restrict to work over $\C$, rather than an arbitrary field as in \cite{RR24}, which will be needed for our connection to toric geometry.

Given a finite set $Z\subseteq \C^n$, the ideal of $Z$ is 
\begin{equation}
    I(Z) = \{ f \, | \, f(p)=0 \text{ for all }p\in Z\}\subseteq \C[x_1,\dots,x_n].
\end{equation}
Geometrically, the quotient $\C[x_1,\dots,x_n]/I(Z)$ is the coordinate ring of the reduced union of points $Z$. Now consider the ideal $\gr I(Z) = \{ f_d \, | \, f = f_0+f_1+\cdots +f_d\in I(Z)\}$, where $f_i$ is the homogeneous degree $i$ piece of $f$. The quotient $\C[x_1,\dots,x_n]/\gr I(Z)$ is the coordinate ring of the scheme obtained by degenerating the set $Z$ to a reduced scheme supported at the origin. Unlike $I(Z)$, $\gr I(Z)$ is a homogeneous ideal and so $\C[x_1,\dots,x_n]/\gr I(Z)$ is graded by total degree in $x_1,\dots,x_n$.

\begin{definition}
    The $q$-Ehrhart series of a lattice polytope $P$ is the formal power series
    \[ E_P(t,q) = \sum_{m,d\geq 0} \dim_\C \big( \C[x_1,\dots,x_n]/\gr I((mP)\cap \Z^n) \big)_d\, t^m q^d. \]
\end{definition}

One can show using Gr\"obner bases that $\C[x_1,\dots,x_n]/I(Z)$ and $\C[x_1,\dots,x_n]/\gr I(Z)$ have the same dimension as $\C$-vector spaces. It follows that $E_P(t,q)$ is a refinement of $E_P(t)$, in the sense that $E_P(t,1) = E_P(t)$.

\begin{definition}
    The \textit{harmonic space} of a finite set $Z\in \C^n$ is the set
    \[ V_Z = \left\{ g\in \C[x_1,\dots,x_n] \, \bigg| \, f\left( \frac{\partial}{\partial x_1},\dots, \frac{\partial}{\partial x_n}\right) g(x_1,\dots,x_n) = 0 \text{ for all } f\in \gr I(Z) \right\}. \]
\end{definition}

The harmonic space is linear subspace $V_Z\subseteq \C[x_1,\dots,x_n]$ of dimension $|Z|$ and is graded by total degree in $x_1,\dots,x_n$.

\begin{definition}
    The \textit{harmonic algebra} of $P$ is defined by
    \[ \H_P = \bigoplus_{m\geq 0} V_{(mP)\cap \Z^n}, \]
    with the bigrading $(\H_P)_{m,d} = (V_{(mP)\cap \Z^n})_d$. 
\end{definition}
By Proposition 5.4 of \cite{RR24}, $\H_P$ forms a bigraded algebra with respect to the usual multiplication of polynomials, and its bigraded Hilbert series is the $q$-Ehrhart series of $P$:
\begin{equation}
    E_P(t,q) = \sum_{m,d\geq 0} \dim_C (\H_P)_{m,d} \, t^m q^d.
\end{equation}

\section{The harmonic algebra via the semigroup algebra}\label{sec:harmonic}

We begin with an alternative characterization of the harmonic space $V_Z$ for any finite set $Z\subseteq \C^n$ that may be of interest in its own right. In the following, for any point $\a = (a_1,\dots,a_n)\in \C^n$ we write $e^{\a\cdot\x} = e^{a_1x_1+\cdots+a_nx_n}$ and $(1+\x)^{\a} = (1+x_1)^{a_1}\cdots(1+x_n)^{a_n}$, both considered as formal power series in $x_1,\dots,x_n$ after expanding at $(0,\dots,0)$.

\begin{proposition}\label{prop:harmonics}
    For any finite set $Z\in \C^n$, the harmonic space $V_Z$ is the set of lowest degree nonzero homogeneous parts of formal power series in the linear span $\langle (1+\x)^{\a} \, | \, \a\in Z \rangle$.
\end{proposition}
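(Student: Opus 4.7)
The plan is to identify the span $\Ll := \langle (1+\x)^\a : \a \in Z \rangle$ with the Macaulay inverse system of $I(Z)$, and then extract $V_Z$ as the associated graded space with respect to the filtration by lowest nonzero degree. The key first move is the formal substitution $y_i = \log(1+x_i)$, which turns $(1+\x)^\a$ into the exponential $e^{\a \cdot \mathbf{y}}$. Since $f(\partial_{\mathbf y}) e^{\a\cdot \mathbf{y}} = f(\a) e^{\a\cdot \mathbf{y}}$, every element of $\Ll$ is annihilated by $f(\partial_{\mathbf y})$ for all $f\in I(Z)$; by Macaulay duality for the reduced $0$-dimensional scheme $Z$ the entire space of formal power series in $\C[[\mathbf{y}]]$ annihilated by $I(Z)$ has dimension $|Z|$, matching $\dim \Ll$, so this containment is an equality.

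Next, I would verify that the ``lowest nonzero homogeneous part'' operation sends $\Ll$ into $V_Z$. Given $g = \sum_\a c_\a e^{\a\cdot \mathbf{y}}\in\Ll$ with lowest nonzero homogeneous component $g_d$, and any $h = h_0+h_1+\cdots+h_D\in I(Z)$, one expands $h(\partial_{\mathbf y})g=0$ into homogeneous parts and observes that the degree $d-D$ contribution is exactly $h_D(\partial_{\mathbf y})g_d$, which must therefore vanish. Since $\gr I(Z)$ is spanned by such top-degree parts $h_D$, this says $g_d$ is annihilated by every element of $\gr I(Z)$, i.e.\ $g_d\in V_Z$. Because the substitution $y_i = x_i + O(\x^2)$ preserves lowest-degree homogeneous parts, the same polynomial $g_d$ is obtained whether one expands in $\x$ or in $\mathbf{y}$.

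Finally, a dimension count closes the argument. Filter $\Ll$ by $\Ll_d = \{g\in\Ll : g$ has lowest nonzero degree $\geq d\}$; the assignment $g\mapsto g_d$ gives an injection $\Ll_d/\Ll_{d+1}\hookrightarrow (V_Z)_d$, and summing yields $\dim\Ll = \sum_d \dim(\Ll_d/\Ll_{d+1})\leq \sum_d \dim(V_Z)_d = \dim V_Z$. Both sides equal $|Z|$: the left by linear independence of the $(1+\x)^\a$ (transported from linear independence of distinct exponentials), and the right by Macaulay duality applied to the homogeneous artinian ring $\C[\x]/\gr I(Z)$, whose total dimension agrees with that of $\C[\x]/I(Z)$. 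Equality then forces each inclusion $\Ll_d/\Ll_{d+1}\hookrightarrow(V_Z)_d$ to be an isomorphism, which is the content of the proposition. The main subtlety I anticipate is the bookkeeping around the substitution $\mathbf{y}\leftrightarrow\x$: one has to check that the identity $h_D(\partial)g_d=0$ is intrinsic to the polynomial $g_d$ and does not care whether $\partial$ means $\partial_\x$ or $\partial_{\mathbf y}$, which in the end reduces to the observation that constant-coefficient differential identities on homogeneous polynomials depend only on the coefficients of the operator and the polynomial.
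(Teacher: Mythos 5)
Your proof is correct, and it overlaps substantially with the paper's argument in overall strategy, but it differs in two notable ways. First, the paper simply cites two results of Reiner and Rhoades: Corollary 4.4 of \cite{RR24} (that $V_Z$ consists of the lowest-degree parts of the inverse system $W_Z$ of $I(Z)$) and Lemma 4.6 of \cite{RR24} (that $W_Z = \langle e^{\a\cdot\x}\rangle$), whereas you re-derive both from scratch --- the latter by the eigenfunction property of exponentials plus a Macaulay-duality dimension count, the former by your expansion of $h(\partial)g=0$ into homogeneous components (correctly isolating $h_D(\partial)g_d$ as the degree $d-D$ part) together with the final dimension count over the filtration $\Ll_d$. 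This makes your argument self-contained at the cost of length. Second, and more substantively, the key comparison between $(1+\x)^{\a}$ and $e^{\a\cdot\x}$ is handled by a genuinely different mechanism: the paper proves a lemma comparing coefficients directly, using that $\binom{y_1}{d_1}\cdots\binom{y_n}{d_n}$ has unique top-degree term $y_1^{d_1}\cdots y_n^{d_n}/(d_1!\cdots d_n!)$, so that each coefficient of the binomial series equals the corresponding coefficient of the exponential series plus a combination of lower-degree ones; you instead make the formal substitution $y_i=\log(1+x_i)$ and observe that any substitution of the form $y_i = x_i + O(\x^2)$ preserves lowest-degree nonzero homogeneous parts. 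Your substitution argument is arguably cleaner and explains \emph{why} the two families have the same leading behavior, while the paper's coefficient computation is more elementary and avoids any discussion of composing formal power series. Both are complete; the one point worth stating explicitly in your write-up is the (easy) verification that an order-preserving substitution with identity linear part sends the lowest nonzero homogeneous component $g_d(\mathbf{y})$ to $g_d(\x)$ on the nose, which is what lets you transport the identity $h_D(\partial)g_d=0$ between the two coordinate systems.
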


\begin{proof}
    Let $W_Z$ be the set of formal power series $g(x_1,\dots,x_n)$ such that for all $f\in I(Z)$
    \[ f\left( \frac{\partial}{\partial x_1},\dots, \frac{\partial}{\partial x_n}\right) g(x_1,\dots,x_n) = 0.\]
    
    By Corollary 4.4 of \cite{RR24}, $V_Z$ is the set of lowest degree homogeneous parts of elements of $W_Z$. By Lemma 4.6 of \cite{RR24}, $W_Z$ is the linear span $\langle e^{\a\cdot\x} \, | \, \a\in Z \rangle$ where we consider $e^{\a\cdot\x} = e^{a_1x_1+\cdots+a_nx_n}$ as a formal power series in $x_1,\dots,x_n$. The proposition therefore follows from the following observation.
\end{proof}

\begin{lemma}
    For any finite set $Z\in \C^n$ and set of constants $\{ c_{\a} \,|\, a\in Z\}$, the lowest degree nonzero homogeneous parts of the series 
    $\sum_{\a\in Z} c_{\a}e^{\a\cdot\x}$ and $\sum_{\a\in Z} c_{\a}(1+\x)^{\a}$ coincide.
\end{lemma}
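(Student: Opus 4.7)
The plan is to prove the lemma by relating the two series through the logarithm substitution. The key observation is that
\[ (1+\x)^{\a} = \prod_{i=1}^n (1+x_i)^{a_i} = \prod_{i=1}^n e^{a_i \log(1+x_i)} = e^{\a \cdot \mathbf{y}}, \]
where $\mathbf{y} = (y_1,\dots,y_n)$ with $y_i = \log(1+x_i) = x_i - x_i^2/2 + x_i^3/3 - \cdots$, viewed as a formal power series. First I would make this identification precise and then rewrite
\[ \sum_{\a\in Z} c_{\a}(1+\x)^{\a} = \sum_{\a\in Z} c_{\a} e^{\a\cdot\mathbf{y}}, \]
so the statement becomes: substituting $x_i \mapsto y_i$ in $\sum_{\a} c_{\a}e^{\a\cdot\x}$ preserves its lowest degree nonzero homogeneous part.

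Next I would observe that $y_i \equiv x_i \pmod{\mathfrak{m}^2}$, where $\mathfrak{m} = (x_1,\dots,x_n)$ is the maximal ideal of the origin in $\C[[\x]]$. Thus the substitution $\phi: x_i\mapsto y_i$ is a continuous $\C$-algebra automorphism of $\C[[\x]]$ that preserves the $\mathfrak{m}$-adic filtration, and whose induced map on the associated graded ring $\bigoplus_k \mathfrak{m}^k/\mathfrak{m}^{k+1}$ is the identity. Concretely, for any homogeneous polynomial $g_k(\x)$ of degree $k$, the series $g_k(\mathbf{y})$ has the form $g_k(\x) + (\text{terms of degree} > k)$, since $y^{\alpha} = x^{\alpha} + O(\mathfrak{m}^{|\alpha|+1})$ for every multi-index $\alpha$.

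Writing $\sum_{\a}c_{\a}e^{\a\cdot\x} = g_d(\x) + g_{d+1}(\x) + \cdots$ with $g_d$ the lowest degree nonzero homogeneous part, it then follows that
\[ \sum_{\a}c_{\a}(1+\x)^{\a} = \sum_{\a}c_{\a}e^{\a\cdot\mathbf{y}} = g_d(\x) + (\text{terms of degree} > d), \]
so the lowest nonzero homogeneous part is again $g_d(\x)$, proving the lemma. The main potential obstacle is the bookkeeping around formal power series convergence/substitution, but since $y_i$ has no constant term, the substitution $\phi$ is well-defined on all of $\C[[\x]]$ and commutes with arbitrary infinite linear combinations, so nothing delicate arises. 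No estimate on individual degree components beyond the one above is required.
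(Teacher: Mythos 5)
Your proof is correct, but it takes a genuinely different route from the paper's. The paper works entirely at the level of coefficients: it writes down the $x_1^{d_1}\cdots x_n^{d_n}$ coefficients $E(d_1,\dots,d_n)$ and $F(d_1,\dots,d_n)$ of the two series, expands $\binom{y_1}{d_1}\cdots\binom{y_n}{d_n}$ as a polynomial in $y_1,\dots,y_n$ with leading term $\frac{y_1^{d_1}\cdots y_n^{d_n}}{d_1!\cdots d_n!}$, and deduces the unitriangular relation $F = E + (\text{lower-degree }E\text{'s})$, from which the equality of lowest nonzero homogeneous parts follows by linear algebra. You instead conjugate by the formal logarithm, writing $(1+\x)^{\a} = e^{\a\cdot\mathbf{y}}$ with $y_i = \log(1+x_i)$, and invoke the general principle that a substitution $x_i \mapsto x_i + O(\mathfrak{m}^2)$ induces the identity on the associated graded ring of $\C[[\x]]$ and hence preserves leading forms. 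Both arguments are, at bottom, unitriangularity statements; yours lives in the power series ring while the paper's lives in the coefficient array. Your version is more conceptual and makes clear that nothing is special about the exponential beyond the identity $(1+\x)^{\a} = e^{\a\cdot\log(1+\x)}$; its one substantive input is that this identity holds coefficientwise for \emph{arbitrary complex} exponents $\a$ (the paper defines $(1+\x)^{\a}$ by the generalized binomial expansion, so you should note that $\sum_k \binom{a}{k}x^k = \exp(a\log(1+x))$ as formal series for all $a\in\C$ --- true, since each coefficient is a polynomial identity in $a$). The paper's version is more elementary and self-contained, requiring no justification of formal substitution. Either proof is acceptable.
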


\begin{proof}
    The $x_1^{d_1}\cdots x_n^{d_n}$ coefficients on the series $\sum_{\a\in Z} c_{\a}e^{\a\cdot\x}$ and $\sum_{\a\in Z} c_{\a}(1+\x)^{\a}$ are given by
    \[ E(d_1,\dots,d_n) := \sum_{\mathbf{a}\in Z} c_{\mathbf{a}} \frac{a_1^{d_1}\cdots a_n^{d_n}}{d_1 ! \cdots d_n !}  \hspace{1cm} \text{and}\hspace{1cm} F(d_1,\dots,d_n) := \sum_{\mathbf{a}\in Z} c_{\mathbf{a}} {a_1 \choose d_1} \cdots {a_n \choose d_n} \]
    respectively. Consider the monomial expansion of ${y_1 \choose d_1} \cdots {y_n \choose d_n}$ as a polynomial in $y_1,\dots,y_n$, which has unique highest degree term $\frac{y_1^{d_1}\cdots y_n^{d_n}}{d_1 ! \cdots d_n !}$. Applying this expansion to each term in the sum for $F(d_1,\dots,d_n)$ gives a relation $F(d_1,\dots,d_n) = E(d_1,\dots,d_n)+ \cdots$ where the remaining terms are linear combinations of lower degree coefficients of $\sum_{\mathbf{a}\in Z} c_{\mathbf{a}} e^{\mathbf{a}\cdot \mathbf{x}}$. This implies that the nonzero coefficients on monomials of minimum total degree for the two series coincide, as claimed.
\end{proof}

\begin{definition}
    For a lattice polytope $P$, let $F_{m,d}\subseteq (A_P)_m$ denote the set of elements represented by Laurent polynomials $f(x_1,\dots,x_n)z^m$, as in (\ref{semigroupalgebra}), such that $f(x_1,\dots,x_n)$ vanishes to order at least $d$ at the point $e=(1,\dots,1)$. 
\end{definition}

These subsets equip the semigroup algebra with a descending filtration $(A_P)_m = F_{m,0}\supseteq F_{m,1}\supseteq \cdots$. These filtered pieces are compatible in the sense that $F_{m,d}\cdot F_{m',d'}\subseteq F_{m+m',d+d'}$ for all $m,m',d,$ and $d'$, so we may define the associated graded algebra of $A_P$ as the bigraded algebra 
\begin{equation}\label{assgraded}
    \gr A_P = \bigoplus_{m,d\geq 0} F_{m,d}/F_{m,d+1}.
\end{equation}

Theorem \ref{thm:filtration} can now be quickly derived from Proposition \ref{prop:harmonics}.

\begin{proof}[Proof of Theorem \ref{thm:filtration}]
    An element $f(x_1,\dots,x_n)z^m\in (A_P)_m$ is in the filtered piece $F_{m,d}$ if and only if the monomial expansion of $f(x_1+1,\dots,x_n+1)$ contains only monomials of total degree at least $d$. We can therefore consider the quotient map $F_{m,d}\to F_{m,d}/F_{m,d+1}$ as sending such an element to the homogeneous degree $d$ part of $f(x_1+1,\dots,x_n+1)$. By Proposition \ref{prop:harmonics}, the image $F_{m,d}/F_{m,d+1}$ is exactly the degree $d$ graded piece of $V_{(mP)\cap \Z^n} = (\H_P)_{m,*}$. This identifies $\H_P$ with $\gr A_P$ as bigraded algebras, as desired.
\end{proof}

\section{The harmonic algebra via toric geometry}\label{sec:toric}

In this Section we prove Theorem \ref{thm:toric} which states that the harmonic algebra is isomorphic to the quotient of the section ring $R_P = \bigoplus_{m,d} H^0(\Bl_e X_P, \Oo(mH-d E))$ by the ideal generated by the canonical section $s\in \Oo(E)$. 

\begin{proof}[Proof of Theorem \ref{thm:toric}]
    For $d\geq 0$, multiplication by $s^d$ gives an injective map
    \[ H^0(\Bl_e X_P, \Oo(mH-d E))\xhookrightarrow{\cdot s^d} H^0(\Bl_e X_P, \Oo(mH)) \simeq H^0(X_P, \Oo(mH))\simeq (A_P)_m  \]
    whose image is exactly the set of sections that vanish to order at least $d$ at the point $e$, and
    \[ H^0(\Bl_e X_P, \Oo(mH))\xrightarrow{\cdot s^d} H^0(\Bl_e X_P, \Oo(mH+dE)) \]
    is an isomorphism. For all $m$ and $d$ we can therefore identify $(R_P)_{m,d} = H^0(\Bl_e X_P, \Oo(mH-d E))$ with $F_{m,d}\subseteq (A_P)_m$, the Laurent polynomials vanishing to order at least $d$ at $e$. In other words, we have the identification of bigraded rings
    \begin{equation}
        R_P \simeq \bigoplus_{m,d} F_{m,d}
    \end{equation} 
    
    Under this identification, the inclusions $F_{m,d+1}\subseteq F_{m,d}$ correspond multiplication by $s$ between the graded pieces of $R_P$. The quotient $R_P/(s)$ is therefore isomorphic to $\bigoplus_{m,d} F_{m,d}/F_{m,d+1}$, which is isomorphic to the harmonic algebra by Theorem \ref{thm:filtration}.
\end{proof}

    Finally, we analyze an example due to González and Karu, Example 1.3 of \cite{GK16}, that gives a non-finitely generated harmonic algebra.

\begin{example}\label{ex:nonfg} 
    Using the geometric terminology of González and Karu, the blowup of the weighted projective space $\P(15,26,7)$ at $e$ is not a Mori dream space, so the section ring $R_P$ is not finitely generated for any polytope $P$ such that $X_P\simeq \P(15,26,7)$. A minimal integral such triangle has vertices $(0,0), (7,56),$ and $(-45,30)$.
    
    We briefly describe González and Karu's method to show that $R_P$ is not finitely generated. For this, it is convenient to take $P$ to be a non-integral rational triangle. Let $H$ be the divisor on $\widetilde{X} = \mathrm{Bl}_e \P(15,26,7)$ corresponding to the rational triangle $P$ with vertices $(0,0),(2/15,6/15),(-6/7,4/7),$ and $E$ the exceptional divisor. This triangle $P$ is the smallest dilation of the integral triangle above that contains at least two lattice points: $(0,0)$ and $(0,1)$. This means $\Oo(H)$ has a two-dimensional space of sections spanned by the Laurent polynomials $1$ and $y$. The curve $C$ defined by $y-1=0$ on $\widetilde{X}$ has class $H-E$ and lies on the boundary of the effective cone of $\widetilde{X}$. 

    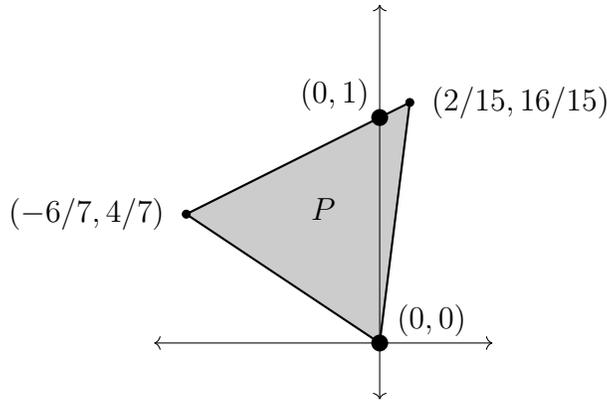
\begin{figure}[h]
        \centering
        \begin{tikzpicture}[scale = 3]
            \filldraw[fill opacity = .2,thick] (0,0) -- (-6/7,4/7) -- (2/15,16/15) -- cycle;
            \node[label = {$P$}] at (-1/4,.45) (nodeA) {};
             \node[label=left:{$(-6/7,4/7)$}] at (-6/7,4/7) (nodeB) {};
             \node[label=right:{$(2/15,16/15)$}] at (2/15,16/15) (nodeC) {};
             \node[label=left:{$(0,1)$}] at (.05,1.1) (nodeC) {};
             \node[label=right:{$(0,0)$}] at (-.02,.1) (nodeC) {};
            \draw[<->] (-1,0) -- (1/2,0);
            \draw[<->] (0,-1/4) -- (0,3/2);
            \filldraw (0,1) circle (1pt);
            \filldraw (0,0) circle (1pt);
            \filldraw (2/15,16/15) circle (.5pt);
            \filldraw (-6/7,4/7) circle (.5pt);
        \end{tikzpicture}
        \caption{A rational triangle $P$}
        \label{fig:triangle}
    \end{figure}
    
    As an intermediate step towards showing that $R_P$ is not finitely generated, González and Karu show that the nef divisor $D= H-\frac{104}{105}E$ contains $C$ in its stable base locus. Concretely, the geometric facts listed above mean that for any positive integer $m$:
    \begin{enumerate}
        \item The maximum order of vanishing at $e$ of a Laurent polynomial supported on $mP$ is exactly $m$, which is attained by $(y-1)^m$. 
        \item Any Laurent polynomial supported on $mP$ with order of vanishing at $e$ least $\frac{104}{105}m$ is divisible by $y-1$.
        \item For any $0\leq d<\frac{104}{105}m$ there exists an integer $k$ such that there is a Laurent polynomial supported on $kmP$ that is not divisible by $y-1$ and has order of vanishing $kd$ at $e$.
    \end{enumerate}
    Together, these facts imply that $\H_P$ cannot be finitely generated. Indeed, for any finitely generated subalgebra of $\H_P$, there is a constant $\alpha<104/105$ defined as the maximum value of $d/m$ among the generators of bidegree $(m,d)$ not divisible by $y-1$. Any element of the subalgebra of bidegree $(m,d)$ with $d/m>\alpha$ must be divisible by $y-1$. For any integers $d,m$ such that $\alpha <d/m<104/105$, property 3 above implies that the subalgebra does not contain all elements of $\H_P$ of bidegree $(km,kd)$ for all $k$.
\end{example}

\begingroup
\setstretch{1}
\bibliographystyle{plain}
\bibliography{refs.bib} 
\endgroup

\end{document}